\newtheorem{theorem}{Theorem}[section]
\newtheorem{lemma}[theorem]{Lemma}
\newtheorem{corollary}[theorem]{Corollary}
\newtheorem{definition}{Definition\rm}
\newtheorem{conjecture}{Conjecture\rm}
\newcounter{paraga}[section]
\renewcommand{\theparaga}{{\bf\arabic{paraga}.}}
\newcommand{\paraga}{\medskip \addtocounter{paraga}{1} 
\noindent{\theparaga\ } }
\begin{document}

\def\MP{\,{<\hspace{-.5em}\cdot}\,}
\def\SP{\,{>\hspace{-.3em}\cdot}\,}
\def\PM{\,{\cdot\hspace{-.3em}<}\,}
\def\PS{\,{\cdot\hspace{-.3em}>}\,}
\def\EP{\,{=\hspace{-.2em}\cdot}\,}
\def\PP{\,{+\hspace{-.1em}\cdot}\,}
\def\PE{\,{\cdot\hspace{-.2em}=}\,}
\def\N{\mathbb N}
\def\C{\mathbb C}
\def\Q{\mathbb Q}
\def\R{\mathbb R}
\def\T{\mathbb T}
\def\A{\mathbb A}
\def\Z{\mathbb Z}
\def\demi{\frac{1}{2}}

\begin{titlepage}
\author{Abed Bounemoura~\footnote{abed@impa.br, IMPA, Estrada Dona Castorina 110, Rio de Janeiro, Brasil, 22460-320}}
\title{\LARGE{\textbf{Optimal stability and instability for near-linear Hamiltonians}}}
\end{titlepage}

\maketitle

\begin{abstract}
In this paper, we will prove a very general result of stability for perturbations of linear integrable Hamiltonian systems, and we will construct an example of instability showing that both our result and our example are optimal. Moreover, in the same spirit as the notion of KAM stable integrable Hamiltonians, we will introduce a notion of effectively stable integrable Hamiltonians, conjecture a characterization of these Hamiltonians and show that our result prove this conjecture in the linear case. 
\end{abstract}
 
\section{Introduction and results}

\paraga Let $n\geq 2$ be an integer, $\T^n=\R^n/\Z^n$ and $B=B_R$ be an open ball in $\R^n$ of radius $R>1$ with respect to the supremum norm. We shall consider a near-integrable Hamiltonian of the form
\begin{equation*}
\begin{cases} 
H(\theta,I)=h(I)+f(\theta,I) \\
|f| \leq \varepsilon <\!\!<1
\end{cases}
\end{equation*}
where $(\theta,I) \in \T^n \times B$ are action-angle coordinates for the integrable part $h$ and $f$ is a small perturbation in some suitable topology defined by a norm $|\,.\,|$. For simplicity, we shall restrict ourself to the analytic case, that is we assume that $h$ and $f$ are bounded and real-analytic on $D=\T^n \times B$, so that they have holomorphic extensions to some neighbourhood
\[ V_\sigma(D)=\{(\theta,I)\in(\C^n/\Z^n)\times \C^{n} \; | \; |\mathcal{I}(\theta)|<\sigma,\;d(I,B)<\sigma\} \] 
for some $\sigma>0$, where $\mathcal{I}$ denotes the imaginary part and $d$ is the distance associated to the supremum norm. Then the norm of the perturbation $|f|=|f|_\sigma$ is defined by
\[ |f|_\sigma=|f|_{C^0(V_\sigma(D))}=\sup_{z\in V_\sigma(D)}|f(z)|. \] 
In the absence of perturbation, that is when $\varepsilon$ is zero, all solutions $(\theta(t),I(t))$ of the corresponding Hamiltonian system are quasi-periodic and the action variables $I(t)$ are integrals of motion, but in general this is no longer the case after perturbation. Basically, assuming some non-degeneracy condition on the integrable part $h$ and some regularity on $h$ and $f$, KAM theory is concerned with the persistence of many quasi-periodic solutions (\cite{Kol54}), while Nekhoroshev theory deals with the variation of the action components of all solutions (\cite{Nek77}). 

In the realm of KAM theory, one can introduce a notion of ``KAM-stable" integrable Hamiltonian $h$ for which any sufficiently small perturbation possesses a set of positive Lebesgue measure of quasi-periodic solutions closed to the unperturbed ones, and such that the measure $m(\varepsilon)$ of the complement of this set satisfy $\lim_{\varepsilon\rightarrow 0}m(\varepsilon)=0$. These KAM stable Hamiltonians have been characterized: they are exactly Rüssmann non-degenerate Hamiltonians, that is functions $h: B \rightarrow \R$ such that $\nabla h(B)$ is not contained in any hyperplane of $\R^n$. We refer to the nice survey \cite{Sev03} for precise results and references. 

It is tempting to define a notion of ``Nekhoroshev-stable" Hamiltonians and to try to characterize them, but first one has to come up with a precise definition. It is easy to see from the perturbative character of the Hamiltonian that for all solutions $(\theta(t),I(t))$ with initial condition $(\theta_0,I_0)$, one has
\[ \lim_{\varepsilon\rightarrow 0}\left(\sup_{0\leq |t| < \varepsilon^{-1}}|I(t)-I_0|\right)=0. \]
Without further hypothesis on $h$, this cannot be improved. Indeed, following \cite{Nek79} and \cite{Nie06}, if the restriction of $h$ to some affine hyperplane, whose direction is generated by integer vectors, has a non-isolated critical point, then there exist $\delta>0$ and an arbitrarily small perturbation of size $\varepsilon$ such that  
\[ \sup_{0\leq t \leq \varepsilon^{-1}}|I(t)-I_0|\geq \delta.  \] 
A simple example of this type will be given below. This prompts us to introduce the following two definitions.

\begin{definition}
An integrable Hamiltonian $h: B \rightarrow \R$ is rationally steep if its restriction to any affine hyperplane of the form $I_0+\Lambda$, with $I_0\in B$ and $\Lambda$ a linear subspace of $\R^n$ generated by integer vectors, has only isolated critical points.  
\end{definition}

\begin{definition}
An integrable Hamiltonian $h: B \rightarrow \R$ is effectively stable if for any $f : \T^n \times B \rightarrow \R$ with $|f|\leq \varepsilon$, all solutions $(\theta(t),I(t))$ of the Hamiltonian system $H=h+f$ starting at $(\theta_0,I_0)$ satisfy
\[ \lim_{\varepsilon\rightarrow 0}\left(\sup_{0\leq |t|\leq \varepsilon^{-1}}|I(t)-I_0|\right)=0. \] 
\end{definition}

Hence an integrable Hamiltonian is effectively stable if one can ensure that the time of stability $T(\varepsilon)$, that is the maximal time during which the variation $V(\varepsilon)$ of the action of all solutions satisfy $\lim_{\varepsilon\rightarrow 0}V(\varepsilon)=0$, is at least $1/\varepsilon$. Then how large one can choose $T(\varepsilon)$ will depend on more specific properties of the integrable Hamiltonian. 

The following conjecture was essentially made in \cite{BouT}.

\begin{conjecture}\label{conj}
Effectively stable Hamiltonians are exactly rationally steep Hamiltonians.
\end{conjecture}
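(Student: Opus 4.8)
The plan is to establish the two implications of the characterization separately and --- in line with the paper's stated scope --- to carry the argument through completely in the linear case $h(I)=\omega\cdot I$, where rational steepness admits a transparent reformulation. First I would observe that for a linear $h$ the restriction to $I_0+\Lambda$ is itself affine, with constant gradient equal to the orthogonal projection of $\omega$ onto $\Lambda$; it therefore has a non-isolated critical point exactly when this projection vanishes, i.e. when $\omega\perp\Lambda$. Since $\Lambda$ is generated by integer vectors, this occurs for some nonzero $\Lambda$ if and only if $\omega\cdot k=0$ for some $k\in\Z^n\setminus\{0\}$. Hence, in the linear case, \emph{rationally steep} is equivalent to \emph{non-resonant} ($\omega\cdot k\neq 0$ for all $k\neq 0$), and the conjecture reduces to: a linear Hamiltonian is effectively stable if and only if its frequency is non-resonant.

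For the instability direction I would argue by contraposition, in the spirit of the example promised in the introduction. If $\omega\cdot k_0=0$ for some $k_0\in\Z^n\setminus\{0\}$, take $f=\varepsilon\sin(2\pi\,k_0\cdot\theta)$, so $|f|\leq\varepsilon$. Then $\dot\theta=\omega$ gives $k_0\cdot\theta(t)=k_0\cdot\theta_0$ for all $t$ precisely because $k_0\cdot\omega=0$, so the forcing $\dot I=-2\pi\varepsilon\,k_0\cos(2\pi\,k_0\cdot\theta_0)$ is constant in time and $I(t)$ drifts linearly; for a generic initial angle this yields $|I(\varepsilon^{-1})-I_0|\geq\delta$ with $\delta\approx 2\pi|k_0|$ independent of $\varepsilon$, contradicting effective stability. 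For a general $h$ this step is supplied directly by the constructions of \cite{Nek79} and \cite{Nie06} invoked above.

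For the stability direction, assume $\omega$ non-resonant and perform one step of averaging: expand $f=\sum_k f_k(I)e^{2\pi i k\cdot\theta}$ and remove the harmonics with $0<|k|\leq K$ by a symplectic transformation generated by $\chi=\sum_{0<|k|\leq K}\frac{f_k(I)}{2\pi i\,\omega\cdot k}e^{2\pi i k\cdot\theta}$, whose size is governed by the least divisor $\alpha(K)=\min_{0<|k|\leq K}|\omega\cdot k|>0$. In the new coordinates $H=\omega\cdot I+f_0(I)+R$, where $R$ consists of the analytic tail, of size $\lesssim\varepsilon e^{-2\pi\delta K}$ on the domain shrunk by $\delta$, and a second-order term of size $\lesssim\varepsilon^2/(\alpha(K)\delta^2)$; since $f_0$ is $\theta$-independent one has $\dot I=-\partial_\theta R$, so integrating over $|t|\leq\varepsilon^{-1}$ bounds the total action drift (including the $\lesssim\varepsilon/\alpha(K)$ size of the transformation) by $\varepsilon/(\alpha(K)\delta^2)+\delta^{-1}e^{-2\pi\delta K}$. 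The crucial point --- and the reason the limit holds with no Diophantine hypothesis --- is that effective stability asks only for a limit, not a rate: keeping $\delta$ fixed and choosing $K(\varepsilon)=\max\{K:\alpha(K)\geq\sqrt{\varepsilon}\}$, non-resonance forces $K(\varepsilon)\to\infty$ while $\varepsilon/\alpha(K(\varepsilon))\leq\sqrt{\varepsilon}\to 0$, so every term vanishes as $\varepsilon\to 0$, uniformly in $I_0\in B$.

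The main obstacle is exactly this stability implication beyond the linear case. For a general rationally steep $h$ one cannot reduce to a single fast-angle average: the resonant web must be treated scale by scale, and rational steepness must be shown to preclude drift along every resonant subspace on the time scale $\varepsilon^{-1}$. This is a genuine Nekhoroshev-type problem in which the usual pointwise, quantitative steepness is replaced by the weaker rational-steepness condition, so the classical geometric estimates do not transfer verbatim; closing this gap is what keeps the general statement a conjecture, and why I would present the theorem only in the linear case here.
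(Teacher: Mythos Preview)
Your argument is correct and, like the paper, establishes the conjecture only in the linear case; the instability direction and the reduction ``rationally steep $\Leftrightarrow$ $\omega$ non-resonant'' for $h(I)=\omega\cdot I$ coincide with the paper's. For the stability direction, however, you take a genuinely lighter route. The paper does not aim directly at effective stability: it proves the much stronger Theorem~\ref{thm1} via an \emph{iterated} resonant normal form (Lemma~\ref{lemlin2}, due to Delshams--Guti\'errez) with the choice $K\Psi(K)\EP\varepsilon^{-1}$, making the non-integrable remainder exponentially small in $K=\Delta(\cdot\,\varepsilon^{-1})$ and yielding the optimal stability time $\delta\varepsilon^{-1}\exp(c_2\Delta(c\varepsilon^{-1}))$; effective stability then drops out as a corollary by taking $\delta=\varepsilon^b$ with $b>0$ small. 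Your single averaging step, with the qualitative choice $K(\varepsilon)=\max\{K:\alpha(K)\geq\sqrt\varepsilon\}$, is more elementary and self-contained --- no normal-form lemma is quoted --- and it isolates exactly why no Diophantine hypothesis is needed for effective stability: non-resonance alone forces $K(\varepsilon)\to\infty$ while $\varepsilon/\alpha(K(\varepsilon))\leq\sqrt\varepsilon\to 0$. The price is that the second-order remainder $\varepsilon^2/\alpha(K)$ you keep (rather than kill by iteration) would dominate long before the paper's exponential timescale, so you obtain no quantitative rate and cannot match the sharpness certified by the instability example of Theorem~\ref{thm2}. In short: your approach buys simplicity; the paper's buys the optimal time estimate.
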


As we already explained, if a Hamiltonian is effectively stable, then it has to be rationally steep, so that only the converse statement of this conjecture is of interest. In this paper we shall prove this conjecture in the specific case where the integrable Hamiltonian is linear. This is an important case, not only because of its own interest, but also because it is a crucial step in showing the conjecture in its full generality (this point will be briefly discussed at the end of the paper).
  
\paraga So from now on we shall restrict to linear integrable Hamiltonians. Given a vector $\omega\in\R^n\setminus\{0\}$, we let $l(I)=\omega.I$ be the linear Hamiltonian with frequency $\omega$ and we consider  
\begin{equation}\label{H}
\begin{cases}\tag{$*$} 
H(\theta,I)=l(I)+f(\theta,I) \\
|f| \leq \varepsilon <\!\!<1.
\end{cases}
\end{equation}
Without loss of generality, we may assume that the system has been rescaled so that $|\omega|=1$, therefore, reordering the components of $\omega$ if necessary, we can write 
\[ \omega=(1,\alpha_1,\dots,\alpha_{n-1})=(1,\alpha), \quad |\alpha_i|<1, \quad i\in\{1,\dots,n-1\}.\] 
Such ``degenerate" integrable systems are never KAM stable, but the problem of effective stability is of different nature as we will show.

First, one can easily see that $l$ being non-rationally steep is equivalent to $\omega$ being resonant, that is there exists $k\in\Z^n\setminus\{0\}$ such that $k.\omega=0$. In this case, the Hamiltonian 
\[ H(\theta,I)=\omega.I+f(\theta), \quad f(\theta)=-\varepsilon\sin(k.\theta), \]
gives rise to a system which can be easily integrated:
\begin{equation*} 
\left\{  
\begin{array}{ccl}
\dot{\theta} &= & \omega \\
\dot{I} &= & \varepsilon k\cos(k.\theta)  
\end{array}
\right.
\Longrightarrow
\left\{  
\begin{array}{ccl}
\theta(t) & = & \theta_0+t\omega \; [\Z^n]\\
 I(t) &= &I_0+\varepsilon k \cos(k.\theta_0).
\end{array}
\right.
\end{equation*}
So any solution starting at $(\theta_0,I_0)$ with $k.\theta_0=0$ satisfy 
\[ \sup_{0\leq t \leq \varepsilon^{-1}}|I(t)-I_0|=|k|\geq 1. \]
Hence non-rationally steep linear Hamiltonians are indeed not effectively stable, and our conjecture in this case states that linear Hamiltonians with a non-resonant frequency are effectively stable. However, the only case for which results are known is when the frequency satisfies a classical Diophantine condition, and there much stronger stability properties hold true in the sense that the time $T(\varepsilon)$ is at least exponentially large with respect to the inverse of the size of the perturbation. These results will be recalled below. In this paper, we shall prove a more general stability result and we will construct an example showing that it is the best one can obtain.

\paraga Let us state precisely our results. We shall denote by $|\,.\,|_{\Z}$ the distance to the integer lattice $\Z$, that is $|x|_{\Z}=\min_{p\in\Z}|x-p|$ for $x\in\R$. The frequency vector $\omega=(1,\alpha_1,\dots,\alpha_{n-1})=(1,\alpha)$ being non-resonant, the function $\Psi=\Psi_{\omega}$ given by
\[ \Psi(K)=\max\left\{|k.\alpha|_{\Z}^{-1} \; | \; k\in\Z^{n-1}, \, 0<|k|\leq K\right\}, \quad K\in\N^* \]
is well-defined. It is obviously strictly increasing on $\N^*$, hence we can extend it (keeping the same notation) as a strictly increasing continuous function defined on $[1,+\infty)$. Then let us also define two additional functions 
\[ \Lambda(x)=x\Psi(x), \quad \Delta(x)=\Lambda^{-1}(x), \quad x\geq 1,   \]
which are also strictly increasing and continuous.

\begin{theorem}\label{thm1}
Let $H$ be as in~(\ref{H}), with $\omega$ non-resonant. Then there exist positive constants $\varepsilon_0, c, c_1$ and $c_2$ depending only on $n,R,\sigma$ and $\omega$ such that if $\varepsilon\leq\varepsilon_0$, all solutions $(\theta(t),I(t))$ of $H$ with $I_0\in B_{R/2}$ satisfy the estimates
\[ |I(t)-I_0|\leq c_1\delta, \quad |t|\leq \delta\varepsilon^{-1}\exp\left(c_2 \Delta(c\varepsilon^{-1})\right). \]
for any $\left(\Delta\left(c\varepsilon^{-1}\right)\right)^{-1}\leq c_1\delta<R/2$.
\end{theorem}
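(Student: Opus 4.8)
My strategy is the standard one for Nekhoroshev-type results: conjugate $H$ to a normal form $l+g(I)+f_*$ in which the non-normalized remainder $f_*$ is exponentially small, and then read off the stability of the actions from the fact that $g$ does not depend on $\theta$. The whole point is that the integrable part is linear, which makes the homological equation transparent. Writing $f=\sum_{k\in\Z^n}f_k(I)e^{2\pi i k.\theta}$, one has $\{l,e^{2\pi i k.\theta}\}=2\pi i(k.\omega)e^{2\pi i k.\theta}$, so a Fourier mode $f_k$ is removed by the generating function $\chi_k=-(2\pi i\,k.\omega)^{-1}f_k$. The only small divisors are the $k.\omega=k_1+\bar k.\alpha$ with $\bar k\ne 0$, and since $|k_1+\bar k.\alpha|\ge|\bar k.\alpha|_{\Z}\ge\Psi(|\bar k|)^{-1}$ for every integer $k_1$, while $|k.\omega|\ge 1$ when $\bar k=0$, all divisors coming from modes $0<|k|\le K$ are bounded below by $\Psi(K)^{-1}$. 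This is the precise quantitative meaning of the non-resonance of $\omega$ up to order $K$, and it is what lets the function $\Psi$ govern the estimates.

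I would then prove an analytic normal-form lemma by iterated averaging (Lie series). At each step the homological equation is solved after truncating the Fourier series, the truncation order being allowed to grow with the step, so that at the $j$-th step the relevant divisors are bounded by $\Psi(\sim j)$ while the corresponding analyticity loss and number of harmonics contribute a further factor of order $j$; the tail discarded by the truncation is exponentially small in the retained analyticity width. Distributing a fixed fraction of the width $\sigma$ over the steps and tracking the Cauchy estimates and the factorial growth of the iterated Poisson brackets, I expect that after $r$ steps one obtains a canonical transformation $\Phi$ conjugating $H$ to $l+g(I)+f_*$ with $g$ independent of $\theta$ and
\[ |f_*|_{\sigma/2}\le \varepsilon\bigl(C\sigma^{-1}\varepsilon\,\Lambda(r)\bigr)^{r}, \]
recalling $\Lambda(r)=r\Psi(r)$, the transformation being $O(\varepsilon\Psi(r))$-close to the identity.

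The gain is then extracted by optimizing over the number of steps. Choosing $r=\Delta(c\varepsilon^{-1})$, that is $\Lambda(r)=c\varepsilon^{-1}$, makes the per-step factor $C\sigma^{-1}\varepsilon\Lambda(r)$ a constant smaller than, say, $1/2$ for $c$ small, so that (absorbing polynomial factors into the exponential) $|f_*|_{\sigma/2}\le\varepsilon\exp(-c_2\Delta(c\varepsilon^{-1}))$; moreover $\Phi$ then displaces the actions by $O(\varepsilon\Psi(r))=O(\varepsilon\Lambda(r)/r)=O(\Delta(c\varepsilon^{-1})^{-1})$, which is exactly the lower threshold imposed on $c_1\delta$. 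Finally I would conclude the stability estimate: in the normal-form coordinates $(\theta',I')$ one has $\dot I'=-\partial_{\theta'}(g+f_*)=-\partial_{\theta'}f_*$, whence $|\dot I'|\le C\sigma^{-1}\varepsilon\exp(-c_2\Delta(c\varepsilon^{-1}))$ by a Cauchy estimate, and a continuation (bootstrap) argument shows that, as long as the orbit stays in the domain of validity of the normal form, $|I'(t)-I'_0|$ grows at most linearly in $t$. Hence $|I'(t)-I_0'|\le\tfrac12 c_1\delta$ for $|t|\le\delta\varepsilon^{-1}\exp(c_2\Delta(c\varepsilon^{-1}))$; transporting this back through the $O(\Delta(c\varepsilon^{-1})^{-1})$-small change of variables and using $c_1\delta\ge\Delta(c\varepsilon^{-1})^{-1}$ yields $|I(t)-I_0|\le c_1\delta$, while the constraints $I_0\in B_{R/2}$ and $c_1\delta<R/2$ guarantee that the orbit never leaves $B$, where the construction is valid.

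The main obstacle is the iterative lemma itself, namely the bookkeeping required to produce a bound of the clean form $\varepsilon(C\sigma^{-1}\varepsilon\Lambda(r))^{r}$. One must control simultaneously, over the $r\sim\Delta(c\varepsilon^{-1})$ steps, the accumulation of the analyticity losses, of the small-divisor factors $\Psi$, of the number $\sim r^{n-1}$ of Fourier modes and of the combinatorial growth of the iterated brackets, and verify that the order-$j$ divisor is correctly captured by $\Psi(j)$, so that these effects conspire to produce exactly the product $\Lambda(j)=j\Psi(j)$ and nothing worse in the exponent. Keeping the dependence on the a priori arbitrary function $\Psi$ confined to the single balancing relation $\Lambda(r)\sim c\varepsilon^{-1}$ is the delicate point, since no growth rate on $\Psi$ (hence no Diophantine condition on $\omega$) is assumed.
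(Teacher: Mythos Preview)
Your proposal is correct and follows the same route as the paper's general-$n$ proof: the paper quotes the required normal-form lemma (Lemma~\ref{lemlin2}, from Delshams--Guti\'errez) as a black box, applies it with $\Lambda=\{0\}$, $\lambda=\Psi(K)^{-1}$ and $K=\Delta(\cdot\,\varepsilon^{-1})$, and then deduces the action bound exactly as you do. One small correction to your sketch of that lemma: in the standard iterative construction the truncation order is fixed once at $K\sim r$ and the per-step small divisor is $\Psi(K)^{-1}$ at \emph{every} step (not $\Psi(j)^{-1}$ at step $j$), which is what actually produces the clean bound $\varepsilon\bigl(C\sigma^{-1}\varepsilon\,\Lambda(r)\bigr)^{r}$ you wrote down.
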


Depending on the growth of the function $\Delta$, the exponential factor in the time of stability below might not be very large but choosing $\delta=\varepsilon^{b}$ with $b>0$ arbitrarily small, thanks to the factor $\delta\varepsilon^{-1}$ the time of stability is at least $1/\varepsilon$ and this proves our conjecture in the linear case.

\begin{corollary}
For linear integrable Hamiltonians, rationally steep Hamiltonians are effectively stable. 
\end{corollary}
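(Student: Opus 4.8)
The corollary is obtained by unwinding Theorem~\ref{thm1}, so the plan is to reduce the statement to a single well-chosen value of the free parameter $\delta$. First I would record the equivalence, already established above via the explicit resonant example, that a linear Hamiltonian $l(I)=\omega.I$ is rationally steep if and only if $\omega$ is non-resonant. Thus it suffices to prove that a non-resonant $\omega$ yields an effectively stable $l$: for every $f$ with $|f|\leq\varepsilon$ and every initial action $I_0$ (in the domain $B_{R/2}$ on which Theorem~\ref{thm1} provides its a priori estimate) one must show $\sup_{|t|\leq\varepsilon^{-1}}|I(t)-I_0|\to 0$ as $\varepsilon\to 0$. Since $\omega$ is non-resonant, $\Psi$, $\Lambda$ and $\Delta$ are well-defined and Theorem~\ref{thm1} applies for all $\varepsilon\leq\varepsilon_0$.

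The heart of the matter is to select $\delta=\delta(\varepsilon)$ so that the conclusion of Theorem~\ref{thm1} covers the whole interval $|t|\leq\varepsilon^{-1}$ while the resulting bound $c_1\delta$ on the variation of the actions still tends to $0$. Two competing requirements must be met: \emph{admissibility}, $(\Delta(c\varepsilon^{-1}))^{-1}\leq c_1\delta<R/2$, and \emph{reach}, $\delta\varepsilon^{-1}\exp(c_2\Delta(c\varepsilon^{-1}))\geq\varepsilon^{-1}$, the latter being equivalent to $\delta\geq\exp(-c_2\Delta(c\varepsilon^{-1}))$. I would therefore take the balanced choice
\[ c_1\delta=\max\left\{c_1\exp\left(-c_2\Delta(c\varepsilon^{-1})\right),\ \left(\Delta(c\varepsilon^{-1})\right)^{-1}\right\}, \]
which by construction satisfies both the lower admissibility bound and the reach condition.

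The decisive point is then that $\Delta(c\varepsilon^{-1})\to+\infty$ as $\varepsilon\to 0$. This follows because $\Lambda(x)=x\Psi(x)$ is strictly increasing, continuous and unbounded (as $x\to\infty$ and $\Psi\geq\Psi(1)>0$), so its inverse $\Delta$ shares these properties; feeding in $x=c\varepsilon^{-1}\to\infty$ gives the claim. Consequently both terms in the maximum above tend to $0$, hence $c_1\delta\to 0$, which in particular guarantees $c_1\delta<R/2$ for $\varepsilon$ small. Plugging this $\delta$ into Theorem~\ref{thm1} yields $\sup_{|t|\leq\varepsilon^{-1}}|I(t)-I_0|\leq c_1\delta\to 0$, i.e. effective stability, completing the proof.

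Since Theorem~\ref{thm1} is the substantive input, the corollary contains no serious difficulty; the only subtlety I expect is precisely the tension between the two constraints on $\delta$. When $\omega$ is Diophantine, $\Psi$ grows polynomially, $\Delta$ grows like a positive power of its argument, and the simple choice $\delta=\varepsilon^{b}$ of the discussion above already reaches time $\varepsilon^{-1}$. For a general non-resonant, possibly Liouville, $\omega$ the function $\Delta$ may grow arbitrarily slowly and the exponential factor can be close to $1$, so $\delta=\varepsilon^{b}$ need no longer reach $\varepsilon^{-1}$; the balanced choice above is what makes the argument uniform over all non-resonant frequencies.
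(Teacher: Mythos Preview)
Your argument is correct and follows the paper's intended route: the corollary is deduced directly from Theorem~\ref{thm1} by selecting $\delta=\delta(\varepsilon)$ so that the stability interval covers $|t|\le\varepsilon^{-1}$ while $c_1\delta\to 0$. The paper itself gives only the one-line justification ``choosing $\delta=\varepsilon^{b}$ with $b>0$ arbitrarily small, thanks to the factor $\delta\varepsilon^{-1}$ the time of stability is at least $1/\varepsilon$.'' You correctly observe that for a Liouville frequency $\Delta$ may grow so slowly that neither the reach condition $\varepsilon^{b}\exp\big(c_2\Delta(c\varepsilon^{-1})\big)\ge 1$ nor the admissibility bound $c_1\varepsilon^{b}\ge \big(\Delta(c\varepsilon^{-1})\big)^{-1}$ is guaranteed for any fixed $b>0$; your balanced choice
\[
c_1\delta=\max\!\left\{c_1\exp\!\big(-c_2\Delta(c\varepsilon^{-1})\big),\ \big(\Delta(c\varepsilon^{-1})\big)^{-1}\right\}
\]
repairs this and makes the deduction rigorous uniformly over all non-resonant $\omega$. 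So your proof is the same approach as the paper's, but with the gap in the paper's informal sentence filled in.
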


We shall give an elementary proof of Theorem~\ref{thm1} in the case $n=2$, using only one rational approximation and a one-phase averaging, in the same spirit as in \cite{Loc92}. This method of proof can be easily extended for any $n\geq 2$, but in general it fails to give the best result. 

Hence for any $n\geq 2$, we shall use more classical techniques, namely general resonant normal forms as in \cite{Pos93} and \cite{DG96}, and so our proof will not be essentially new. In the special case where the frequency $\omega$ is Diophantine, that is when there exist $\gamma>0$ and $\tau\geq n-1$ such that for all $k\in\Z^{n-1}\setminus\{0\}$, 
$|k.\alpha|_{\Z}\geq \gamma |k|^{-\tau}$, 
then, in Theorem~\ref{thm1}, we can choose
\[ \Psi(x)=\gamma^{-1}x^{\tau}, \quad \Lambda(x)=\gamma^{-1}x^{1+\tau}, \quad \Delta(x)=(\gamma x)^{\frac{1}{1+\tau}}  \]
and our result gives
\[ |I(t)-I_0|\leq c_1\delta, \quad |t|\leq \delta\varepsilon^{-1}\exp\left(c_2 (c\gamma \varepsilon^{-1})^{\frac{1}{1+\tau}}\right). \]
for any $(c^{-1}\gamma^{-1}\varepsilon)^{\frac{1}{1+\tau}}\leq c_1\delta<R/2$. In this case, this is exactly the result obtained in \cite{DG96}. In \cite{Pos93}, there is a similar but less flexible result, since there one can only choose $\delta\sim 1$. Let us also note that this Diophantine case was first considered in \cite{Fas90}, and the result there was even more flexible since one has the freedom to choose $\delta$ equals, up to a multiplicative constant, to $\varepsilon$, but the proof is restricted to the Diophantine case and fails without this assumption.

In any cases, our next theorem shows precisely that the time of stability achieved in Theorem~\ref{thm1} is the best possible.

\begin{theorem}\label{thm2}
For any non-resonant vector $\omega\in\R^n$, there exists a sequence $(f_j)_{j\in\N^*}$ of analytic functions on $V_\sigma(D)$, with $|f_j|_\sigma=\varepsilon_j\rightarrow 0$ when $j\rightarrow +\infty$, such that the system $H_j=l+f_j$ has orbits which satisfy
\[ |I(t)-I_0|=|t|\varepsilon_j\exp\left(-2\sigma \Delta(c\varepsilon_j^{-1})\right). \]
for a constant $c$ depending only on $R,\sigma$ and $\omega$.
\end{theorem}

This result says that for a specific arbitrarily small perturbation and for some solutions of the perturbed system, the inequality of Theorem~\ref{thm1} are in fact equality (up to constants depending only on $n,R,\sigma$ and $\omega$), so that Theorem~\ref{thm1} cannot be improved. Conversely, Theorem~\ref{thm1} implies that the above theorem is the best possible.  

The construction of the example of instability in Theorem~\ref{thm2} is elementary, and it will follow very naturally from our proof of the stability result in the case $n=2$. It is also inspired by an example given in \cite{Sev03}.

Therefore, as far as one is interested in exponential stability (that is, $T(\varepsilon)$ is exponentially large with respect to $\varepsilon^{-1}$), then a polynomial growth of the function $\Psi$, which is nothing but a Diophantine condition, is both sufficient and necessary. To obtain a non-trivial polynomial time of stability such as $\varepsilon^{-r}$, $r>1$, then an exponential growth of the function $\Psi$ is sufficient and necessary.

\paraga The plan of the paper is the following: the proof of the above theorems will be given in the next section, and further comments on the results are given in the last section. Throughout the paper, in order to avoid cumbersome expressions, we will replace positive constants depending only on $n,R,\sigma$ and $\omega$ with a dot. More precisely, an assertion of the form ``there exists a constant $c>0$ depending on the above parameters such that $u < cv$'' will be simply replaced with ``$u\MP v$'', when the context is clear.

\section{Proof of the results}

\paraga We shall start by giving an elementary proof of Theorem~\ref{thm1} in the special case $n=2$.

Given a vector $v=(1,p/q)\in\R^2$, where $p/q$ is a non-zero rational number in its lowest term, we let $l_v$ be the linear Hamiltonian with frequency $v$. The normal form result that we shall need, which is due to Lochak-Neishtadt (\cite{LN92}, see also \cite{Pos99a}), is just a one-phase averaging.  

\begin{lemma}\label{lemlin}
Consider a Hamiltonian $H=l_v+f_v$ defined on $V_{\sigma}(D)$ with $|f_v|_{\sigma}\MP \varepsilon$, and assume that for some $K>1$,
\begin{equation}\label{hyplin}
Kq\varepsilon \MP 1.
\end{equation}
Then there exists an analytic symplectic transformation
\[ \Phi : V_{\sigma/2}(D) \rightarrow V_{\sigma}(D) \]
with $|\Phi-\mathrm{Id}|_{\sigma/2} \MP q\varepsilon$ such that
\[ H'=H\circ\Phi=l_v+g+f_v' \]
with $\{g,l_v\}=0$, and the estimates
\[ |g|_{\sigma/2} \MP \varepsilon, \quad |f_v'|_{\sigma/2} \MP \varepsilon e^{-\cdot K} \]
holds true. 
\end{lemma}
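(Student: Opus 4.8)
The plan is to build $\Phi$ as the time-one flow of an auxiliary Hamiltonian $\chi$ chosen to kill, Fourier mode by Fourier mode, the part of $f_v$ that fails to Poisson-commute with $l_v$. Everything hinges on one arithmetic fact about the divisors $k\cdot v$. Writing $v=(1,p/q)$ with $p/q$ in lowest terms, a mode $k=(k_1,k_2)\in\Z^2$ gives $k\cdot v=(k_1q+k_2p)/q$; hence either $k\cdot v=0$, the \emph{resonant} modes, which form the rank-one module generated by $(p,-q)$, or the numerator is a nonzero integer and $|k\cdot v|\geq 1/q$. Thus on the non-resonant modes the small divisors are bounded below by $1/q$, which is exactly why a single-frequency averaging costs only the factor $q$ appearing throughout the statement.

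First I would expand $f_v=\sum_k\hat f_k(I)\,e^{2\pi i k\cdot\theta}$ and use analyticity on $V_\sigma(D)$ to get $|\hat f_k|\MP\varepsilon\,e^{-2\pi\sigma|k|}$, so that the tail over $|k|>K$, measured on any slightly smaller strip, is already $\MP\varepsilon e^{-\cdot K}$. For one averaging step I would split $f_v=g_0+f_v^{\mathrm{nr},\leq K}+f_v^{>K}$ into its resonant part $g_0=\sum_{k\cdot v=0}\hat f_k e^{2\pi i k\cdot\theta}$, its non-resonant part of order $\leq K$, and the high-mode tail, and solve the homological equation $\{l_v,\chi\}=-f_v^{\mathrm{nr},\leq K}$ by setting $\hat\chi_k=\hat f_k/\big(2\pi i\,(k\cdot v)\big)$ (up to sign) on the relevant modes. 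The bound $|k\cdot v|\geq 1/q$ then gives $|\chi|\MP q\varepsilon$ on a domain shrunk by a fixed fraction of $\sigma$, the loss being controlled by the truncation at $K$ when summing the Fourier series.

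I would then set $\Phi=\Phi^1_\chi$ and invoke the smallness hypothesis $Kq\varepsilon\MP 1$, which through Cauchy estimates on $X_\chi$ guarantees that $\Phi$ maps $V_{\sigma/2}(D)$ into $V_\sigma(D)$ with $|\Phi-\mathrm{Id}|\MP q\varepsilon$. Expanding by the Lie series $H\circ\Phi=l_v+f_v+\{l_v,\chi\}+\{f_v,\chi\}+\cdots$ and using $\{l_v,\chi\}=-f_v^{\mathrm{nr},\leq K}$, the non-resonant modes of order $\leq K$ cancel and one is left with $l_v+g_0$, where $\{g_0,l_v\}=0$ because $g_0$ consists only of resonant modes, plus a remainder made of the tail $f_v^{>K}$ ($\MP\varepsilon e^{-\cdot K}$) and Poisson-bracket terms of size $\MP\varepsilon\,(Kq\varepsilon)$, the extra factor $K$ coming from the $\theta$-derivatives acting on the truncated series.

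Because a single step only gains the factor $Kq\varepsilon<1$ on the non-resonant remainder, the last step is to iterate. Running the averaging $N$ times on a decreasing sequence of strips whose widths sum to $\sigma/2$, the resonant contributions accumulate into $g$ with $|g|\MP\varepsilon$, the composed transformation stays $\MP q\varepsilon$ by summing a geometric series, and the non-resonant remainder is multiplied by $\MP Kq\varepsilon$ at each step, so after $N$ steps it is $\MP\varepsilon(Kq\varepsilon)^N+\varepsilon e^{-\cdot K}$. Taking $N$ of order $K$, which is legitimate since $Kq\varepsilon\MP 1$ keeps $Kq\varepsilon$ bounded away from $1$, forces the first term below $\varepsilon e^{-\cdot K}$ and yields $|f_v'|_{\sigma/2}\MP\varepsilon e^{-\cdot K}$. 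I expect the main obstacle to be precisely this bookkeeping: one must calibrate the per-step width loss, the derivative factors $K$, and the number of iterations $N$ against $K$ so that the geometric decay actually reaches the exponential target while the total transformation and the cumulative domain loss stay under control. This balancing, rather than any individual estimate, is the delicate part of the argument.
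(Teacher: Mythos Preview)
The paper does not give its own proof of this lemma; it is quoted from Lochak--Neishtadt (with a reference also to P\"oschel) and used as a black box. Your outline is a correct sketch of the standard argument behind that result. The key arithmetic fact you isolate --- that for $v=(1,p/q)$ in lowest terms one has either $k\cdot v=0$ or $|k\cdot v|\geq 1/q$ --- is exactly what makes this a \emph{one-phase} averaging costing only the factor $q$.

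One remark on the bookkeeping you rightly flag as delicate. The factor $K$ in the per-step contraction need not come from $\theta$-derivatives of a truncated Fourier series. In the Lochak--Neishtadt scheme one does not truncate at all: at step $j$ one solves $\{l_v,\chi_j\}=[f_j]-f_j$, where $[\,\cdot\,]$ is the full resonant projection (equivalently the average along the $q$-periodic flow of $l_v$), obtaining $|\chi_j|\MP q|f_j|$ with no $K$ in sight. The factor $K$ then appears only because one runs $N\sim K$ iterations, each on a strip shrunk by $\sigma/(2N)\sim\sigma/K$, so that the Cauchy denominator per step is of order $\sigma/K$ and the contraction ratio per step is of order $Kq\varepsilon$. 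The hypothesis $Kq\varepsilon\MP 1$ is then precisely what keeps this ratio bounded below $e^{-1}$, and $N\sim K$ steps deliver the target $\varepsilon e^{-\cdot K}$. This is equivalent to your route via truncation, but it makes the balance among width loss, number of steps, and contraction transparent; if you keep \emph{both} sources of $K$ (derivatives of the truncated series \emph{and} the $N\sim K$ Cauchy losses) you pick up an extra power of $K$ and the estimate no longer closes under the stated hypothesis.
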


This is a so-called resonant normal form, that is the perturbation $f_v$ has been reduced, up to an exponentially small term $f_v'$, to its resonant part $g$ which satisfies $\{g,l_v\}$. Now a simple observation is that if the denominator $q$ is large, then so is the size of any integer vector which is orthogonal to $v$. In such a case, the resonant term $g$ itself is exponentially small. Let us state this as a lemma which complements the result above.

\begin{lemma}\label{fourier}
Under the previous hypotheses, assume also that $q>K$. Then 
\[ g=\bar{g}+g', \quad \bar{g}=\int_{\T^n}g, \quad |\bar{g}|_{\sigma/2}\MP \varepsilon,  \quad |g'|_{\sigma/4}\MP \varepsilon e^{- \cdot K}. \]
\end{lemma}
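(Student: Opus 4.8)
The plan is to use the commutation relation $\{g,l_v\}=0$ to pin down the Fourier support of $g$ in the angles, and then to play the arithmetic of the rational approximation $v=(1,p/q)$ against the analyticity estimate $|g|_{\sigma/2}\MP\varepsilon$ furnished by Lemma~\ref{lemlin}.

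First I would expand $g$ in its Fourier series in the angle variable,
\[ g(\theta,I)=\sum_{k\in\Z^n}g_k(I)e^{2\pi i k.\theta}, \]
and observe that, since $l_v=v.I$, one has $\{g,l_v\}=\nabla_\theta g\cdot v$, so the hypothesis $\{g,l_v\}=0$ forces $(k.v)\,g_k\equiv 0$ and hence $g_k\equiv 0$ whenever $k.v\neq 0$. Thus $g$ is supported on the resonance module $\mathcal{M}_v=\{k\in\Z^n\mid k.v=0\}$, and the decomposition $g=\bar g+g'$ is exactly the splitting into the zero mode $k=0$ and the remaining nonzero resonant modes. The estimate on $\bar g$ is then immediate: $\bar g(I)=\int_{\T^n}g(\theta,I)\,d\theta$ is an average, so for $I$ in the $\sigma/2$ domain it is bounded by $|g|_{\sigma/2}\MP\varepsilon$, giving $|\bar g|_{\sigma/2}\MP\varepsilon$.

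The arithmetic step, which is where the extra hypothesis $q>K$ enters, is to measure the size of the nonzero elements of $\mathcal{M}_v$. The relation $k.v=0$ reads $k_0q+k_1p=0$, and since $p/q$ is in lowest terms this forces $k=m(-p,q)$ for some $m\in\Z$; hence $\mathcal{M}_v=\Z\cdot(-p,q)$ and every nonzero $k\in\mathcal{M}_v$ satisfies $|k|\geq q>K$. This makes precise the observation that a large denominator admits only large resonant frequencies.

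Finally I would estimate the tail $g'=\sum_{k\in\mathcal{M}_v\setminus\{0\}}g_k(I)e^{2\pi i k.\theta}$. Shifting the contour of each angle integration into the strip of width $\sigma/2$, on which $g$ is holomorphic and bounded by $|g|_{\sigma/2}\MP\varepsilon$, yields the Cauchy bound $|g_k(I)|\MP\varepsilon\,e^{-\pi\sigma|k|}$ for $I$ in the $\sigma/2$ domain; re-evaluating $g'$ on the smaller domain $V_{\sigma/4}(D)$ costs a factor at most $e^{\pi\sigma|k|/2}$ coming from $|e^{2\pi i k.\theta}|$ with $|\mathcal{I}(\theta)|<\sigma/4$, so that a net decay $e^{-\pi\sigma|k|/2}$ survives (it is important to measure $|k|$ by the same norm in both places so the cancellation is genuine). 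Summing the resulting series over $\mathcal{M}_v\setminus\{0\}$, I get a geometric series whose terms all carry the gap $|k|\geq q>K$, with leading term $\MP e^{-\pi\sigma q/2}\leq e^{-\pi\sigma K/2}$; its ratio is bounded away from $1$ because $|k|\geq q>K\geq 1$, so the sum is comparable to its first term and
\[ |g'|_{\sigma/4}\MP\varepsilon\,e^{-\cdot K}. \]
The only delicate point is this bookkeeping of the analyticity widths: one must exploit the full strip $\sigma/2$ on which $g$ is controlled to generate the coefficient decay, and spend only part of it (the further $\sigma/4$) when re-summing, leaving a definite fraction $\pi\sigma/2$ of the exponent to convert the frequency gap $q>K$ into the claimed exponentially small bound.
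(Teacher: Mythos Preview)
Your proof is correct and follows essentially the same route as the paper: Fourier expand $g$, use $\{g,l_v\}=0$ to restrict the support to $\{k:k.v=0\}$, exploit the coprimality of $p$ and $q$ to get $|k|\geq q>K$ for every nonzero resonant mode, and then invoke the exponential decay of Fourier coefficients of an analytic function on a slightly smaller strip. The only cosmetic difference is that you identify the resonance module explicitly as $\Z\cdot(-p,q)$ and sum the geometric series by hand, whereas the paper phrases the last step as the ``classical estimate'' $|g'_{>K}|_{\sigma/4}\MP|g|_{\sigma/2}e^{-\cdot K}$; the content is the same.
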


\begin{proof}
Expanding $g$ as a Fourier series $g(\theta,I)=\sum_{k\in\Z^n}\hat{g}_k(I)e^{2i\pi k.\theta}$, we can write $g=\bar{g}+g'$ with
\[ g'(\theta,I)=\sum_{k\in\Z^n\setminus\{0\}}\hat{g}_k(I)e^{2i\pi k.\theta}.\]
Now the condition that $\{g,l_v\}=0$ is easily seen to be equivalent to $\hat{g}_k(I)=0$ if $k.v\neq 0$, that is
\[ g'(\theta,I)=\sum_{k.v=0,\;k\neq 0}\hat{g}_k(I)e^{2i\pi k.\theta}. \]
Now take $k=(k_1,k_2)\in\Z^2\setminus\{0\}$ such that $k.v=0$, that is $qk_1+pk_2=0$. First one can see that neither $k_1$ nor $k_2$ is zero, otherwise this would imply that either $p$ or $q$ is zero. Therefore $q$ divides $pk_2$, but as $q$ and $p$ are relatively prime, then $q$ divides $k_2$ and hence $|k|\geq |k_2|\geq q>K$. This means that 
\[ |g'|_{\sigma/2}\leq |g'_{>K}|_{\sigma/2}, \quad g'_{>K}(\theta,I)=\sum_{|k|>K}\hat{g}_k(I)e^{2i\pi k.\theta}. \]
It is now a classical estimate that
\[ |g'_{>K}|_{\sigma/4}\MP |g|_{\sigma/2}e^{- \cdot K} \MP \varepsilon e^{- \cdot K}  \]
with implicit constants depending only on $\sigma$.
\end{proof}

Part of the arguments given above works only for $n=2$. However, for any $n\geq 2$, some other simple arguments along these lines can be used (but as we already said, this does not give the best result in general). Now we can prove Theorem~\ref{thm1} for $n=2$.

\begin{proof}[Proof of Theorem~\ref{thm1}, $n=2$]
Here $\omega=(1,\alpha)\in\R^2$ with $|\alpha|<1$. The size of the perturbation $\varepsilon>0$ being given, we choose $K$ such that
\[ K\Psi(K)\EP\varepsilon^{-1}, \quad K=\Delta(\cdot\varepsilon^{-1}) \]
for some implicit constant to be chosen below. First, assuming $\varepsilon$ is sufficiently small, that is $\varepsilon\MP 1$, we can ensure that $K>1$ and we can apply Dirichlet's box principle to approximate $\alpha$ by a rational number with denominator $q<\Psi(K)$: we obtain a vector $v=(1,p/q)\in\R^2$, $p\in\Z\setminus\{0\}$, such that 
\[ |\omega-v|\leq q^{-1}\Psi(K)^{-1}, \quad 1<q<\Psi(K). \]
Moreover, from the definition of $\Psi$,
\[ \Psi(q)^{-1}\leq|q\alpha-p|<\Psi(K)^{-1}, \]  
so that $q>K$. Hence
\[ |\omega-v|\leq K^{-1}\Psi(K)^{-1}\EP\varepsilon. \]
Now our Hamiltonian $H$ can be written as
\[ H=l+f=l_v+f_v, \quad f_v=l-l_v+f, \]
with $|f_v|_\sigma\MP\varepsilon$. Moreover, since $q<\Psi(K)$, then $q\varepsilon \MP K^{-1}$, and choosing properly our implicit constant in the definition of $K$, we can ensure that condition~(\ref{hyplin}) is met and hence we can apply Lemma~\ref{lemlin}: there exists an analytic symplectic transformation
\[ \Phi : V_{\sigma/2}(D) \rightarrow V_{\sigma}(D) \]
with $|\Phi-\mathrm{Id}|_{\sigma/2}\MP q\varepsilon \MP K^{-1}$ such that
\[ H'=H\circ\Phi=l_v+g+f_v' \]
with $\{g,l_v\}=0$, and the estimates
\[ |g|_{\sigma/2} \MP \varepsilon, \quad |f_v'|_{\sigma/2} \MP \varepsilon e^{-\cdot K} \]
holds true. Moreover, since $q>K$, Lemma~\ref{fourier} can also be applied and therefore 
\[ H'=H\circ\Phi=l_v+\bar{g}+g'+f_v'=l_v+\bar{g}+\tilde{f} \]
with $\bar{g}$ integrable and 
\[ |\tilde{f}|_{\sigma/4} \leq |g'|_{\sigma/4}+|f_v'|_{\sigma/4} \MP \varepsilon e^{-\cdot K}.\]
Now let us write $(\theta,I)=\Phi(\theta',I')$, and take any $\delta$ such that $K^{-1}\MP\delta<R/2$. Since $l_v+\bar{g}$ is integrable, the mean value theorem gives
\[ |I'(t)-I'_0|\MP \delta, \quad |t|\leq \delta\varepsilon^{-1} e^{\cdot K}. \]
Now coming back to the original coordinates and using $|\Phi-\mathrm{Id}|_{\sigma/2} \MP K^{-1}$, this gives
\[ |I(t)-I_0|\MP \delta, \quad |t|\leq \delta\varepsilon^{-1} e^{\cdot K}, \]
which, recalling that $K=\Delta(\cdot\varepsilon^{-1})$, is our statement. This completes the proof.
\end{proof}

\paraga Now for any number of degrees of freedom, the strategy we explained above can be easily extended and a result of stability can be obtained. However, it requires to compare simultaneous and linear Diophantine approximation, and it seems to us that, unless the frequency is badly approximable, it cannot give the best result from a quantitative point of view.

Therefore we shall use different arguments to prove Theorem~\ref{thm1} for any $n\geq 2$, following the ``dual" approach. Given any sub-lattice $\Lambda$ of $\Z^n$, we shall say that a function $g$ defined on $V_\sigma(D)$ is $\Lambda$-resonant if its Fourier expansion is of the form
\[ g(\theta,I)=\sum_{k\in \Lambda}\hat{g}_k(I)e^{2i\pi k.\theta}, \quad (\theta,I)\in V_\sigma(D). \]
Then, given any $K\geq 1$ and any $\lambda>0$, we shall say that a vector $w\in\R^n$ is $(\lambda,K)$-non resonant modulo $\Lambda$ if
\[ \forall k\in\Z^n\setminus\Lambda, \quad |k|\leq K, \quad |k.w|\geq \lambda.  \]
Finally, given such a vector $w$, let us write $l_w(I)=w.I$. The normal form result that we shall use here is due to Delshams-Gutiérrez (\cite{DG96}).

\begin{lemma}\label{lemlin2}
Consider a Hamiltonian $H=l_w+f$ defined on $V_{\sigma}(D)$, with $|f|_{\sigma}\MP \varepsilon$ and assume that 
\begin{equation}\label{hyplin2}
K\lambda^{-1}\varepsilon \MP 1.
\end{equation}
Then there exists an analytic symplectic transformation
\[ \Phi : V_{\sigma/2}(D) \rightarrow V_{\sigma}(D) \]
with $|\Phi-\mathrm{Id}|_{\sigma/2} \MP \lambda^{-1}\varepsilon$ such that
\[ H'=H\circ\Phi=l_w+g+f' \]
where $g$ is $\Lambda$-resonant, and with the estimates
\[ |g|_{\sigma/2} \MP \varepsilon, \quad |f'|_{\sigma/2} \MP \varepsilon e^{-\cdot K}. \]
\end{lemma}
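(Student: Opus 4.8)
The plan is to construct $\Phi$ as the composition of finitely many elementary averaging steps, in the spirit of Delshams--Guti\'errez. Writing $H=l_w+f$, at each step I would split the current perturbation into three pieces according to its Fourier expansion in $\theta$: the $\Lambda$-resonant part (modes $k\in\Lambda$), the non-resonant part truncated at order $K$ (modes $k\notin\Lambda$ with $|k|\leq K$), and the high-order tail (modes $|k|>K$). The tail is harmless from the outset: since the perturbation is holomorphic on $V_\sigma(D)$, the same classical Fourier estimate used in the proof of Lemma~\ref{fourier} gives a tail of size $\MP\varepsilon e^{-\cdot K}$ after a small loss of analyticity width, so it can be dumped directly into the final remainder $f'$.

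The core of a single step is to remove the truncated non-resonant part by a Lie transform. I would take $\Phi=\Phi_\chi^1$, the time-one flow of the Hamiltonian vector field generated by an auxiliary function $\chi$, and choose $\chi$ to solve the homological equation $\{l_w,\chi\}=-(\text{truncated non-resonant part})$. Since $l_w=w.I$, this is solved mode by mode by dividing the Fourier coefficient $\hat f_k(I)$ by the small divisor $2i\pi(k.w)$. Here the hypothesis that $w$ is $(\lambda,K)$-non resonant modulo $\Lambda$ is exactly what is needed: for every mode that actually appears ($k\notin\Lambda$, $|k|\leq K$) one has $|k.w|\geq\lambda$, whence $|\chi|\MP\lambda^{-1}\varepsilon$ and consequently $|\Phi-\mathrm{Id}|\MP\lambda^{-1}\varepsilon$. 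Expanding $H\circ\Phi$ as a Lie series and using $\{l_w,\chi\}=-(\text{non-resonant part})$, the linear term cancels the non-resonant part; what survives is the resonant part (which is fed into $g$), the tail, and a new remainder which is quadratic, of size $\MP\lambda^{-1}\varepsilon^2$ up to the Cauchy factor coming from bracketing on slightly shrunk domains.

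To turn this quadratic gain into exponential smallness I would iterate. Performing $N\sim K$ successive steps, each costing an analyticity width $\sim\sigma/N$, the total loss is $\sigma/2$, so the composed transformation is defined on $V_{\sigma/2}(D)$ and maps into $V_\sigma(D)$. The per-step Cauchy factor is then $\sim N/\sigma\sim K/\sigma$, so each step multiplies the non-normalized remainder by a factor $\MP K\lambda^{-1}\varepsilon$, which by hypothesis~(\ref{hyplin2}) can be made $\leq 1/2$ after fixing the implicit constants. Hence after $N$ steps the remainder has been contracted by $2^{-N}\MP e^{-\cdot K}$; together with the tail this yields $|f'|_{\sigma/2}\MP\varepsilon e^{-\cdot K}$. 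The resonant contributions accumulate into a single $\Lambda$-resonant function $g$ with $|g|_{\sigma/2}\MP\varepsilon$, and since the successive generating functions have geometrically decaying sizes all bounded by $\lambda^{-1}\varepsilon$, the composed $\Phi$ still satisfies $|\Phi-\mathrm{Id}|_{\sigma/2}\MP\lambda^{-1}\varepsilon$.

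The main obstacle is the quantitative bookkeeping of this iteration: one must choose the number of steps $N$ and the per-step width losses so that the geometric series of sizes and of domain losses both close up, i.e. that the domains nest down exactly to $V_{\sigma/2}(D)$ while the per-step contraction factor stays below $1$ under the single smallness hypothesis $K\lambda^{-1}\varepsilon\MP 1$. The delicate point is to check that the factor $K$ lost through the Cauchy estimates (one power per step, over $N\sim K$ steps) is precisely compensated by~(\ref{hyplin2}), so that the product of per-step factors is genuinely of the form $e^{-\cdot K}$ rather than merely small, and that composing $N\sim K$ near-identity maps does not inflate $|\Phi-\mathrm{Id}|$ beyond $\lambda^{-1}\varepsilon$.
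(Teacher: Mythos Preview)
Your outline is correct and follows precisely the Delshams--Guti\'errez scheme; note that the paper does not actually supply a proof of this lemma but simply attributes it to \cite{DG96}, so there is nothing further to compare against. The one point worth flagging is the bound $|\Phi-\mathrm{Id}|\MP\lambda^{-1}\varepsilon$: as the paper itself remarks, this is what distinguishes the Delshams--Guti\'errez normal form from P\"oschel's (which only gives $K\lambda^{-1}\varepsilon$), and obtaining it requires some care in how one estimates the generating functions and their derivatives---the naive Cauchy estimate on $\chi$ would cost an extra factor of $K$, so you should make sure your bookkeeping avoids this loss.
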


The above normal form result strictly contains Lemma~\ref{lemlin} (the latter corresponds to the case where $\Lambda$ has rank $n-1$, and we can choose $\lambda=q^{-1}$ for any $K\geq 1$), as a consequence its proof is more involved. A similar normal form result is contained in \cite{Pos93}, but for our purpose it is weaker since the distance to the identity of the normalizing transformation $\Phi$ is bigger (it is only of order $K\lambda^{-1}\varepsilon$).

The proof of Theorem~\ref{thm1} is now straightforward.

\begin{proof}[Proof of Theorem~\ref{thm1}]
Here $\omega=(1,\alpha_1,\dots,\alpha_{n-1})$, and this frequency is by definition $(\Psi(K)^{-1},K)$-non-resonant modulo $\{0\}$. The size of the perturbation $\varepsilon>0$ being given, we choose $K$ such that
\[ K\Psi(K)\EP\varepsilon^{-1}, \quad K=\Delta(\cdot\varepsilon^{-1}) \]
with a suitable implicit constant so that the condition~(\ref{hyplin2}) is fulfilled. Then the requirement that $K\geq 1$ gives a threshold $\varepsilon\MP 1$. Therefore we can apply Lemma~\ref{lemlin2}: there exists an analytic symplectic transformation
\[ \Phi : V_{\sigma/2}(D) \rightarrow V_{\sigma}(D) \]
with $|\Phi-\mathrm{Id}|_{\sigma/2} \MP \Psi(K)\varepsilon \EP K^{-1}$ such that
\[ H'=H\circ\Phi=l+g+f' \]
where $g$ is $\{0\}$-resonant, that is $g=\bar{g}$, and with the estimates
\[ |g|_{\sigma/2} \MP \varepsilon, \quad |f|_{\sigma/2} \MP \varepsilon e^{-\cdot K}. \]
Now, as before, let us write $(\theta,I)=\Phi(\theta',I')$, and take any $\delta$ such that $K^{-1}\MP\delta<R/2$. Since $l+\bar{g}$ is integrable, the mean value theorem gives
\[ |I'(t)-I'_0|\MP \delta, \quad |t|\leq \delta\varepsilon^{-1} e^{\cdot K}. \]
Now coming back to the original coordinates and using $|\Phi-\mathrm{Id}|_{\sigma/2} \MP K^{-1}$, this gives
\[ |I(t)-I_0|\MP \delta, \quad |t|\leq \delta\varepsilon^{-1} e^{\cdot K} \]
which ends the proof, since $K=\Delta(\cdot\varepsilon^{-1})$.
\end{proof}

\paraga Now let show that our stability result is optimal by proving Theorem~\ref{thm2}. 

\begin{proof}[Proof of Theorem~\ref{thm2}]
Given $\omega=(1,\alpha_1,\dots,\alpha_{n-1})$, let us pick one component, say $\alpha_1$, and let us denote by $(p_j/q_j)_{j\in\N}$ the sequence of the convergents of $\alpha_1$. The vector $\omega$ being non-resonant, $\alpha_1$ is irrational and this means that the sequence $(q_j)_{j\in\N}$ is strictly increasing. From the classical estimates
\[ (q_j+q_{j+1})^{-1}<|q_j\alpha_1-p_j|<q_{j+1}^{-1}, \quad j\in\N, \]
we deduce that $q_{j+1}<\Psi(q_j)<q_j+q_{j+1}<2q_{j+1}$. Now the perturbation $f_j$ will be of the form
\[ f_j(\theta,I)=f_j^1(I)+f_j^2(\theta), \quad (\theta,I)\in \T^n \times B. \]
First, we choose $f_j^1(I)=v_j.I-\omega.I$, where $v_j=(1,p_j/q_j,\alpha_2,\dots,\alpha_{n-1})$. As
\[ |\alpha_1-p_jq_j^{-1}|<(q_jq_{j+1})^{-1}<2(q_j\Psi(q_j))^{-1}, \]
if we set 
\[ \varepsilon_j=c(q_j\Psi(q_j))^{-1}, \quad q_j=\Delta(c\varepsilon_j^{-1})\]
with a suitable constant $c$ depending on $\alpha_1,R$ and $\sigma$, we obtain $|f_j^1|_\sigma<\varepsilon_j/2$. Then, if we define $k_j=(p_j,-q_j,0,\dots,0)\in \Z^n$, we choose
\[ f_j^2(\theta)=\varepsilon_j\mu_j \cos(k_j.\theta), \quad \mu_j=q_j^{-1}\exp(-2\sigma q_j). \]
For $\theta\in \C^n$ with $|\mathcal{I}(\theta)|\leq\sigma$, we have
\[ |\cos(k_j.\theta)|\leq \exp(2\sigma|k_j|)=\exp(2\sigma q_j)\leq q_j/2 \exp(2\sigma q_j) \]
and therefore $|f_j^2|_\sigma \leq \varepsilon_j/2$ as well. Hence $|f_j|_{\sigma}\leq\varepsilon_j$, and $\varepsilon_j \rightarrow 0$ when $j\rightarrow +\infty$. Now we can write the Hamiltonian
\[ H_j(\theta,I)=h(I)+f_j(\theta,I)=v_j.I+\varepsilon_jq_j^{-1}\exp(-2\sigma q_j) \cos(k_j.\theta) \]
and as $k_j.v_j=0$, the associated system is easily integrated:
\begin{equation*} 
\left\{  
\begin{array}{ccl}
\theta(t) & = & \theta_0+tv_j \quad [\Z^n]\\
 I(t) &= &I_0-tk_jq_j^{-1}\varepsilon_j\exp(-2\sigma q_j)\cos(k_j.\theta_0).
\end{array}
\right. 
\end{equation*}
Choosing any solution with initial condition $(\theta_0,I_0)$ satisfying $k_j.\theta_0=0$, $\cos(k_j.\theta_0)=1$ and using the fact that $|k_j|q_j^{-1}=1$, we obtain
\[ |I(t)-I_0|=|t|\varepsilon_j\exp(-2\sigma q_j). \]
Recalling that $q_j=\Delta(c\varepsilon_j^{-1})$ this gives
\[ |I(t)-I_0|=|t|\varepsilon_j\exp\left(-2\sigma \Delta(c\varepsilon_j^{-1})\right) \]
and this concludes the proof.
\end{proof}

\section{Further comments}\label{comments}

Let us conclude this paper by several remarks.

\paraga First, let us mention that a perturbation of a linear Hamiltonian system, as we have considered, also describes the dynamics in the neighbourhood of an invariant, linearly stable, isotropic and reducible torus carrying a quasi-periodic motion. Our Theorem~\ref{thm1} directly applies to the case where the torus is of full dimension (that is, when the torus is Lagrangian) and gives a non-trivial result of stability. In the case of intermediate dimensions, our result should apply also but there one has to work not only with angle-action coordinates but also with Cartesian coordinates. Anyway, in the case where the torus has dimension zero, that is for an elliptic fixed point, Theorem~\ref{thm1} is useless since in this situation the perturbation has a peculiar form and classical Birkhoff normal form estimates apply and give a much better result of stability. Now Theorem~\ref{thm2} also applies in this setting, except for elliptic fixed point since in this case there is no proper angle-action coordinates to work with (and in fact, for $n=2$, Diophantine and even Brjuno elliptic fixed points are always Lyapounov stable, see \cite{Rus02} and \cite{FK09}). When the dimension of the torus is at least one, our result applies and in the Diophantine case, without any further assumptions, results of exponential stability are the best one can expect. Assuming further conditions on the Birkhoff invariants, it was shown in \cite{MG95} (see also \cite{Bou09} for more general results) that super-exponential stability hold true. Here our result shows that some further conditions are indeed necessary.

\paraga Then, let us discuss the case where the system is assumed to be less regular than analytic. Our proof of Theorem~\ref{thm1} in the case $n=2$ only uses a one-phase averaging. Such a result being available in the Gevrey category (\cite{MS02}) and in the finitely differentiable category (\cite{Bou10}), the proof immediately extends in this case (of course, in finite differentiability, only polynomial stability hold true). Now for any $n\geq 2$, we have used a more elaborate normal form which is not known if the system is not analytic. However, such a normal form can certainly be proved using smoothing techniques so that Theorem~\ref{thm1} should hold in any regularity. Obviously, the proof of Theorem~\ref{thm2} is independent of the regularity and the extension is straightforward.  

\paraga Finally, let us explain how our result can be used to prove in general that effectively stable Hamiltonians are exactly rationally steep Hamiltonians. As we have explained, our result deals with the case where the Hamiltonian is linear. In the non-linear case, first one has to use not only integrable normal forms but also more general resonant forms, which are also well-known. Then, the condition of being rationally steep roughly says that resonances ``do not accumulate" so that any solution, when evolving (of course, if they do not evolve there is nothing to prove), will necessarily encounters non-resonant points, arbitrarily close and in any direction, and therefore the inductive scheme introduced in \cite{Nie07} (see also \cite{BN09} and \cite{Bou11} for refinements), together with some other geometric arguments, should give the desired result.   

\bigskip

{\it Acknowledgments.} Thanks to Jean-Christophe Yoccoz, Bassam Fayad, Stéphane Fischler, Pierre Lochak and special thanks to Laurent Niederman and Jean-Pierre Marco. This work has been done during a stay at Penn State University, the author is grateful to Vadim Kaloshin for his very kind invitation. Finally, the author thanks the CNPq for financial support. 

\addcontentsline{toc}{section}{References}
\bibliographystyle{amsalpha}
\bibliography{Lineaire3}

\end{document}